\def\blfootnote{\gdef\@thefnmark{}\@footnotetext}
\newtheorem{theorem}{Theorem}[section]
\newtheorem{proposition}{Proposition}[section]
\newtheorem{lemma}{Lemma}[section]
\theoremstyle{remark}
\DeclareMathOperator{\ebdiv}{div}
\DeclareMathOperator*{\esssup}{esssup}
\begin{document}
\allowdisplaybreaks
\title{A bound from below on the temperature for the Navier-Stokes-Fourier system.}
\author{Eric Baer\footnote{Department of Mathematics, Massachusetts Institute of Technology; ebaer@math.mit.edu}
\and Alexis Vasseur\footnote{Department of Mathematics, The University of Texas at Austin; vasseur@math.utexas.edu}}
\date{April 8, 2013\blfootnote{Acknowledgement: This work is part of the first author's doctoral dissertation at the University of Texas at Austin (2012).  The first author was supported by the National Science Foundation under Award No. DMS-1204557.  The second author was partially supported by the NSF.}}
\maketitle
\begin{abstract}
We give a uniform bound from below on the temperature for a variant of the compressible Navier-Stokes-Fourier system, under suitable hypotheses.  This system of equations forms a mathematical model of the motion of a compressible fluid subject to heat conduction.  Building upon the work of \cite{MelletVasseur}, we identify a class of weak solutions satisfying a localized form of the entropy inequality (adapted to measure the set where the temperature becomes small) and use a form of the De Giorgi argument for $L^\infty$ bounds of solutions to elliptic equations with bounded measurable coefficients.
\end{abstract}
\section{Introduction}
Let $\Omega\subset \mathbb{R}^3$ be a bounded domain with smooth boundary.  We consider weak solutions to a variant of the Navier-Stokes-Fourier system, in the presence of no external forces and subject to heat conduction driven by Fourier's law:
\begin{align}
\label{nsf_system}\left\lbrace \begin{array}{rl} \partial_t\rho+\ebdiv (\rho u)&=0\\
\partial_t (\rho u)+\ebdiv (\rho u\otimes u)+\nabla p&=\ebdiv \mathbb{S}\\
\partial_t (\rho s)+\ebdiv (\rho s u)+\ebdiv(\frac{-\kappa\nabla \theta}{\theta})&=\sigma
\end{array}\right.
\end{align}
in $(0,T)\times\Omega$, with the initial and boundary conditions
\begin{align*}
\left\lbrace\begin{array}{ll}\rho(0,\cdot)=\rho_0,\quad (\rho u)(0,\cdot)=(\rho u)_0,\quad \theta(0,\cdot)=\theta_0,\\
u(t,\cdot)|_{\partial\Omega}=0,\quad \nabla \theta(t,\cdot)\cdot n(\cdot)|_{\partial\Omega}=0.\end{array}\right.
\end{align*}

This system of equations models the motion of a viscous, compressible, and heat-conducting fluid, where $\rho=\rho(t,x)$ denotes the density of the fluid, $u=u(t,x)$ denotes the velocity of the fluid, and $\theta=\theta(t,x)$ denotes the temperature of the fluid.  The quantity $p$ determines the internal pressure of the system, while $\mathbb{S}$, $\kappa$, $s$ and $\sigma$ denote the stress tensor, heat conduction coefficient, entropy, and entropy production rate, respectively -- our assumptions on the behavior of these quantities are determined by the particular constitutive relations of our model; for more details, see the discussion after the statement of Theorem $\ref{thm_temp1}$, as well as the complete specification in Section $2$.

Recently, Mellet et al. \cite{MelletVasseur} studied bounds from below on the temperature for a suitable class of weak solutions of a variant of (\ref{nsf_system}) when the pressure $p(\rho,\theta)$ is affine in the temperature variable, i.e.
\begin{align*}
p=p_e(\rho)+R\rho\theta,
\end{align*}
in which case the entropy equation (that is, the third equation in (\ref{nsf_system})) is replaced by 
\begin{align*}
\partial_t (\rho\theta)+\ebdiv (\rho \theta u)-\ebdiv(\kappa\nabla\theta)=2\mu|D(u)|^2+\lambda |\ebdiv u|^2-R\rho\theta\ebdiv u.
\end{align*}

In \cite{MelletVasseur}, the authors use an instance of the De Giorgi argument \cite{DeGiorgi} for boundedness and regularity of solutions to elliptic equations with bounded measurable coefficients to establish uniform (in space) bounds on the logarithm of the temperature, which in turn give uniform bounds on the temperature itself.  

The goal of the present work is to adapt the methods of \cite{MelletVasseur} to treat the system ($\ref{nsf_system}$), in the case that the pressure is no longer strictly affine in the temperature variable.  This change in assumption on the pressure corresponds to a somewhat more physically accurate model; in particular, the constitutive assumptions on the quantities driving heat conduction in the system can now be related to basic thermodynamical principles (see \cite{FeireislCMAP,FeireislNovotnyBook} for further discussion on this point).

Our main result is then the following:
\begin{theorem}
\label{thm_temp1}
Fix $T>0$ and $\Omega$ a bounded open set.  Suppose that $\mathbb{S}$, $\kappa$, $\sigma$ and the state relations $s$ and $p$ (which respectively represent the entropy and pressure relations of the system) satisfy the criteria established in Section $2$, and let $(\rho,u,\theta)$ be a weak solution to the Navier-Stokes-Fourier system $(\ref{nsf_system})$ satisfying 
$u\in L^2(0,T;H^1_0(\Omega))$, 
\begin{align}
\int_{\Omega} \rho_0\max\left\{\log\left(\frac{1}{\theta_0}\right),0\right\}dx<\infty.\label{eqAA3}
\end{align}
and $\rho\in L^\infty(0,T;L^\omega(\Omega))$ for some $\omega>3$, along with the local entropy inequality ($\ref{eq_admiss}$) for a.e. $0<t<T$ and a.e. $0<\mathfrak{s}<t$, as well as for a.e. $0<t<T$ with $\mathfrak{s}=0$.\footnote{We shall describe the significance and relevance of these restrictions on the class of weak solutions in the discussion below.}

Then for all $\tau\in (0,T]$, there exists $\eta_{\tau,T}>0$ such that 
\begin{align*}
\theta(t,x)\geq \eta_{\tau,T}.
\end{align*}
for a.e. $\tau<t<T$ and a.e. $x\in\Omega$.
\end{theorem}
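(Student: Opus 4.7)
The plan is to follow the De Giorgi-type iteration of \cite{MelletVasseur}, adapted to handle the non-affine pressure. Fix $\tau\in(0,T]$ and choose a sequence of truncation levels $K_k=K(2-2^{-k})$ with $K>0$ to be fixed large, together with increasing times $\tau_k\uparrow\tau$ starting from $\tau_0=\tau/2$. Set
\[
v_k(t,x)=\bigl(\log(1/\theta(t,x))-K_k\bigr)_+,
\]
so that $\{v_k>0\}=\{\theta<e^{-K_k}\}$. The goal is to prove that the energy
\[
A_k=\esssup_{\tau_k<t<T}\int_\Omega \rho\,v_k^2\,dx+\int_{\tau_k}^T\!\!\int_\Omega \kappa\,|\nabla v_k|^2\,dx\,dt
\]
satisfies a nonlinear recursion $A_k\leq C\mu^k A_{k-1}^{1+\gamma}$ with constants $C,\mu,\gamma>0$ independent of $k$, which forces $A_k\to 0$ as soon as $A_0$ is small enough. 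This in turn yields $\theta\geq e^{-2K}=:\eta_{\tau,T}$ a.e.\ on $(\tau,T)\times\Omega$.

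Step~1 is to apply the local entropy inequality (\ref{eq_admiss}) on cylinders $(\mathfrak{s}_k,t)\times\Omega$ with $\mathfrak{s}_k\in(\tau_{k-1},\tau_k)$ and to extract $A_k$ from its left-hand side using the constitutive relations for $s$, $\kappa$, and $\sigma$ from Section~$2$ (noting that $\nabla v_k=-\nabla\theta/\theta$ on $\{v_k>0\}$, so that the $\kappa|\nabla\theta|^2/\theta^2$ contribution of the entropy production matches the dissipation in $A_k$). Step~2 is to promote the resulting $L^\infty_tL^2_x+L^2_tH^1_x$ control of $v_k$ to an $L^{2+\delta}_{t,x}$ estimate via the Sobolev embedding $H^1_0(\Omega)\hookrightarrow L^6(\Omega)$ in dimension three, together with the hypothesis $\rho\in L^\infty_tL^\omega_x$ for some $\omega>3$ (which provides the spare integrability needed to close the H\"older pairing in transport terms of the form $\rho v_k u\cdot\nabla v_k$). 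Step~3 uses $\{v_k>0\}\subset\{v_{k-1}>K\cdot 2^{-k}\}$ combined with Tchebychev to translate the improved integrability into a geometric smallness gain, producing the advertised recursion. Smallness of $A_0$ is then obtained by applying the entropy inequality with $\mathfrak{s}=0$ on $(0,\tau/2)$, using the initial bound (\ref{eqAA3}) to control $\esssup_{t}\int_\Omega \rho(\log(1/\theta))_+\,dx$ uniformly, and finally choosing $K$ large enough.

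The principal difficulty, and the step that goes beyond \cite{MelletVasseur}, is that the non-affine dependence of $p$ on $\theta$ produces cross terms in the entropy equation that are not sign-definite and that couple $\rho$, $\theta$, and $\nabla u$. These must be controlled so that the constants $C,\mu$ in the recursion remain uniform in $k$ and the exponent gain $\gamma>0$ survives; this is precisely where the hypothesis $\omega>3$ is essential, since a portion of the bad terms must be absorbed into the $L^2_tH^1_x$ control of $u$ via H\"older, and the conjugate exponent to the three-dimensional Sobolev exponent $6$ matches the required integrability only when the density belongs to $L^\infty L^\omega$ with $\omega>3$. A secondary delicate point is the choice of the free initial-time parameter $\mathfrak{s}_k$ in (\ref{eq_admiss}): averaging in $\mathfrak{s}_k$ over $(\tau_{k-1},\tau_k)$ is what allows the boundary slice at $t=\mathfrak{s}_k$ to be controlled by $A_{k-1}$, thereby closing the iteration.
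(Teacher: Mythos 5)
Your overall strategy (De Giorgi iteration on truncations of $\log(1/\theta)$, driven by the local entropy inequality, with the free initial time $\mathfrak{s}$ averaged over $(\tau_{k-1},\tau_k)$ and Tchebyshev providing the nonlinear gain) is the same as the paper's. But there is a genuine gap in your Step 1: the energy you propose, $A_k=\esssup_t\int_\Omega\rho\,v_k^2\,dx+\iint\kappa|\nabla v_k|^2$, contains the \emph{square} of the truncated logarithm, and this is not controlled by the hypothesis. The only entropy inequality granted by the theorem is (\ref{eq_admiss}), whose left-hand side is $\int_\Omega\rho\,\tilde{s}_C\,dx$ with $\tilde{s}_C=s(\rho,C)-s(\rho,f_C(\theta))$; by (\ref{eqAA1})--(\ref{eqAA2}) and (\ref{growth1}) this quantity is comparable to $\int_\Omega\rho\log\bigl(C/f_C(\theta)\bigr)dx$, i.e.\ the \emph{first} power of $v_k$, weighted by $\rho$. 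Obtaining $\esssup_t\int\rho v_k^2$ would require testing the entropy equation against a quadratic renormalization of $\log\theta$, which is a different localized inequality than the one assumed, and which you cannot derive from (\ref{eq_admiss}) for a weak solution. The paper instead works with the first-power functional $U_k$ in (\ref{defu}) and compensates for the resulting loss of integrability with the Sobolev-type inequality of Lemma \ref{lem4}, namely $\lVert F\rVert_{L^2L^6}\leq C(\lVert\rho F\rVert_{L^\infty L^1}+\lVert\nabla F\rVert_{L^2L^2})$, valid under conservation of mass and $\rho\in L^\infty(L^\omega)$ with $\omega>3$. This lemma, not the transport pairing you describe, is where $\omega>3$ is really consumed; without it (or without a quadratic entropy inequality) your recursion does not close, because the Tchebyshev step needs $L^2_tL^6_x$ control of the \emph{unweighted} level-set function $F_k$ while the entropy inequality only yields $L^\infty_tL^1_x$ control of $\rho F_k$.

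A second, smaller gap is your final implication ``$A_k\to 0$ yields $\theta\geq e^{-2K}$ a.e.'' Because the sup-in-time term is weighted by $\rho$, its vanishing only constrains $\theta$ where $\rho>0$, and vacuum regions are not excluded. The paper closes this by using the vanishing of the gradient dissipation $\iint\kappa|\nabla\theta|^2\theta^{-2}\chi_{\theta\leq e^{-M}}=0$ to conclude that $x\mapsto\log(e^{-M}/f_{e^{-M}}(\theta(t,x)))$ is constant in $x$ for a.e.\ $t$, and then invoking conservation of mass $\int\rho(t)=\int\rho_0>0$ to force that constant to be zero. You should incorporate both of these repairs: either replace $\int\rho v_k^2$ by $\int\rho v_k$ and add Lemma \ref{lem4} to your toolkit, or justify a quadratic version of (\ref{eq_admiss}) from scratch (which is not available under the stated hypotheses).
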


Theorem $\ref{thm_temp1}$ states that for a particular class of weak solutions, the temperature is bounded away from zero uniformly in space.\footnote{The proof in fact gives a slightly stronger statement, since the argument does not require that the triple $(\rho,u,s)$ satisfy the full conditions of a weak solution for ($\ref{nsf_system}$).  In particular, the only properties used in the argument are: (i) the bounds $u\in L^2(H_0^1)$, ($\ref{eqAA3}$), $\rho \in L^\infty(L^\omega)$ with $\omega>3$, (ii) the local entropy inequality ($\ref{eq_admiss}$) and (iii) the conservation of total mass $\int_{\Omega} \rho(t,x)dx=\int_{\Omega} \rho_0(x)dx$ (which follows from ($\ref{eq-weak-continuity}$)).  We use the present statement to emphasize the connection with the system ($\ref{nsf_system}$).} We remark that the assumptions on the system appearing in Section $2$ are all physically motivated and are quite general.  In particular, the quantities $p=p(\rho,\theta)$ and $s=s(\rho,\theta)$ represent the internal pressure and entropy of the system, and their precise forms along with those of the viscous stress tensor $\mathbb{S}$, heat conduction coefficient $\kappa$ and entropy production rate $\sigma$ are determined by the particular properties of the fluid under study.  We refer the reader to \cite[Chapter $1$]{FeireislNovotnyBook} for a full discussion of the derivation and physical relevance of the Navier-Stokes-Fourier system ($\ref{nsf_system}$).  

On the other hand, the assumptions on $\rho$, $u$ and $\theta(0,\cdot)$ are more closely connected with our tools and techniques.  As we mentioned above, the authors in \cite{MelletVasseur} use a variant of the De Giorgi argument for $L^\infty$ bounds of solutions to elliptic equations with measurable coefficients to establish the desired $L^\infty$ control over the logarithm of the temperature (which corresponds in our setting to the entropy, i.e. the quantity $s(\rho,\theta)$).  Generally speaking, this technique is based upon the balance of two key pieces of information: 
\begin{enumerate}
\item[(a)] a localized form of an energy/entropy inequality (e.g. the local energy inequality satisfied by suitable weak solutions for the incompressible Navier-Stokes equations; in our case, this takes the form of the local entropy ineuqality $(\ref{eq_admiss})$), and 
\item[(b)] a nonlinear iteration argument driven by the Tchebyshev inequality.
\end{enumerate}

As is often the case (see, e.g. the discussion in \cite{CKN} for the case of incompressible Navier-Stokes), in order to obtain an appropriate form of the local entropy inequality we must restrict the class of weak solutions.  In \cite{MelletVasseur}, the authors work with the solutions constructed by Feireisl in \cite{FeireislBook}, which arise as limits of a somewhat involved approximation procedure.  This procedure in particular preserves an appropriate form of the entropy inequality at the last level of the approximation, which enables the authors to obtain the desired $L^\infty$ bounds uniformly in the approximation parameter.  

\vspace{0.2in}

In the present work, we base our notion of weak solution on the existence theory developed by Feireisl, Novotn\'y et al. (see \cite[Chapter $3$]{FeireislNovotnyBook}, as well as the works \cite{FeireislBook,FeireislIUMJ53_2004,FeireislContempMath371,FeireislCommPDE31_2006,FeireislNovotnyProcRSE135A_2005}).\footnote{Note that the model described in these works contains an additional radiative term when compared to the system ($\ref{nsf_system}$); at present, this additional term is a required component of the known existence theory.  We discuss this in more detail at the conclusion of this introduction.}  In this setting, the identification of an appropriate form of the local entropy inequality is somewhat more subtle, since the entropy $s(\rho,\theta)$ may now depend on both the density $\rho$ and the temperature $\theta$ in a nonlinear way.  In particular, recalling that these localized inequalities are typically obtained by multiplying the equation by an appropriate cutoff function, one observes that the (possibly nonlinear) interaction of $\rho$ and $\theta$ inside $s(\rho,\theta)$ imposes some difficulty.  Moreover, the system possesses diffusion in $\theta$ but not in $\rho$.  Nevertheless, when the functions involved have sufficient regularity, we can use the product and chain rules to obtain a suitable variant.  We also remark that the De Giorgi technique does not apply to general systems; indeed, counterexamples (due to De Giorgi) to the corresponding regularity results exist.

\vspace{0.2in}

Note that the regularity required to perform this procedure is only present at the very beginning of the approximation procedure described in \cite{FeireislNovotnyBook}, where the equation has a number of additional terms which would interfere with the De Giorgi argument.  Indeed, the existence of such smooth solutions for the original system $(\ref{nsf_system})$ is a major open question.  In light of this, we first establish the local entropy inequality for smooth solutions to the Navier-Stokes-Fourier system ($\ref{nsf_system}$).  Our main result, Theorem $\ref{thm_temp1}$, then imposes this inequality as an assumption used to derive the desired temperature bounds.  We refer to the section below on the existence theory for weak solutions for further comments on this issue.

\vspace{0.2in}

\begin{center}
{\bf Notion of weak solution}
\end{center}

As mentioned above, we consider a weak formulation of the system $(\ref{nsf_system})$, based upon the existence theory developed by Feireisl, Novotn\'y et al. for a related system with a additional radiative terms.  We now recall the relevant notion of weak solution from \cite[Section $2.1$]{FeireislNovotnyBook}, written for the system ($\ref{nsf_system}$).

Suppose that $\mathbb{S}$, $\kappa$, $\sigma$, $s$ and $p$ satisfy the consitutive relations established in Section $2$ below.  We say that a triple of measurable functions $(\rho,u,\theta)$ is then a {\it weak solution} of the Navier-Stokes-Fourier system $(\ref{nsf_system})$ if $\rho\in L^1((0,T)\times \Omega)$, $\ebdiv u\in L^1((0,T)\times \Omega)$, and, for some $q>1$,  $\nabla u\in L^1(0,T;L^q(\Omega;\mathbb{R}^{3\times 3}))$, $(\theta,\nabla\theta)\in L^q((0,T)\times \Omega)^2$, with 
\begin{enumerate}
\item[(i)] $\rho\geq 0$, $\theta\geq 0$ a.e. on $(0,T)\times \Omega$,
\item[(ii)] $u|_{\partial\Omega}=0$, and 
\item[(iii)] the continuity equation is satisfied in the renormalized sense of \cite{DiPerna-Lions}; that is,
\begin{align}
\nonumber &\int_0^T\int_{\Omega} \rho B(\rho)(\partial_t\phi^{(1)}+u\cdot \nabla \phi^{(1)})dxdt\\
\label{eq-weak-continuity} &\hspace{0.8in}=\int_0^T\int_{\Omega} b(\rho)\ebdiv u\phi^{(1)} dxdt-\int_{\Omega} \rho_0 B(\rho_0)\phi^{(1)}(0)dx,\\
\intertext{
for every $b\in L^\infty\cap C^0(0,\infty)$, $B:[0,\infty)\rightarrow\mathbb{R}$ defined by 
\begin{align*}
B(\rho)=B_1+\int_1^\rho \frac{b(z)}{z^2}dz,
\end{align*}
and all test functions $\phi^{(1)}\in C^1_c([0,T)\times \overline{\Omega})$, while the momentum and entropy production equations are satisfied in the distributional sense; that is,
}
\nonumber &\int_0^T\int_{\Omega} \rho u\cdot \partial_t\phi^{(2)}+\rho[u\otimes u]:\nabla \phi^{(2)}+p\ebdiv \phi^{(2)} dxdt\\
&\hspace{0.8in}=\int_0^T\int_{\Omega} \mathbb{S}:\nabla\phi^{(2)} dxdt-\int_{\Omega} (\rho u)_0\cdot \phi^{(2)}(0)dx\label{eq-weak-momentum}
\intertext{and}
\nonumber &\int_0^T\int_{\Omega} \rho s(\partial_t\phi^{(3)}+u\cdot \nabla \phi^{(3)})dxdt+\int_0^T\int_{\Omega} \frac{-\kappa\nabla\theta}{\theta}\cdot \nabla\phi^{(3)} dxdt\\
&\hspace{0.8in}=-\langle \sigma,\phi^{(3)}\rangle-\int_{\Omega} \rho_0s(0)\phi^{(3)}(0)dx,\label{eq-weak-entropy-prod}
\end{align}
for  all test functions $\phi^{(2)}\in C^1_c([0,T)\times \overline{\Omega};\mathbb{R}^3)$, $\phi^{(3)}\in C^1_c([0,T)\times \overline{\Omega})$, together with the integrability conditions required to make sense of each quantity in ($\ref{eq-weak-continuity}$), ($\ref{eq-weak-momentum}$) and ($\ref{eq-weak-entropy-prod}$).
\end{enumerate}

\vspace{0.2in}

\begin{center}
{\bf Comments on the existence theory for weak solutions}
\end{center}

As remarked above, the existence of weak solutions (in the sense described in the previous section) satisfying the hypotheses of Theorem $\ref{thm_temp1}$ is not known at present.  In this context, two distinct issues arise: first, the existence of weak solutions for (\ref{nsf_system}) itself in the specific setting of the class of constitutive relations described in Section $2$ and second, existence of weak solutions satisfying the additional hypotheses identified in the statement of Theorem $\ref{thm_temp1}$.

Concerning the first issue, existence of weak solutions for (\ref{nsf_system}), recent work of Feireisl and Novotn\'y \cite{FeireislBook,FeireislIUMJ53_2004,FeireislContempMath371,FeireislCommPDE31_2006,FeireislNovotnyProcRSE135A_2005,FeireislNovotnyBook} have developed an existence theory for weak solutions of (\ref{nsf_system}) when the constitutive relations on the heat conduction, entropy, internal energy and pressure adhere to the hypotheses described in Section $2$ and, moreover, admit an additional term describing the influence of radiation at high temperatures.  It should be noted that this radiative term is currently required for the existence theory.  However, at the present time, it is not clear how to adapt the proof of Theorem $\ref{thm_temp1}$ to allow for the presence of radiation, and we therefore consider the non-radiative case (for which existence of weak solutions is at present an open question).

Turning to the second issue, existence of weak solutions satisfying the full hypotheses of Theorem $\ref{thm_temp1}$, the additional assumptions beyond the notion of weak solution amount to integrability for $u$ and $\rho$, and the local entropy inequality ($\ref{eq_admiss}$).  Note that the integrability condition $u\in L^2(0,T;H_0^1(\Omega))$ can be ensured by taking $\alpha=1$ in ($\ref{eq_mu}$) and ($\ref{eq_kappa}$) (see \cite[Theorem $3.2$]{FeireislNovotnyBook}), while the bound $\rho\in L^\infty(0,T;L^\omega(\Omega))$ for some $\omega>3$ can be imposed by adding an additional term to the pressure; the desired bounds then follow for this adjusted equation via the energy inequality (see for instance the treatment in \cite{FeireislBook}).

Concerning the local entropy inequality, it is reasonable to expect that the arguments we present can be further developed, adapting the proof of existence to preserve the local entropy inequality in the limit (corresponding to existence of suitable weak solutions obtained by Caffarelli, Kohn and Nirenberg in \cite{CKN}); such an approach is carried out for a compressible system without heat conduction in \cite{FNsuitable}.  However, we choose not to pursue these issues further here.

\vspace{0.2in}

\begin{center}
{\bf Outline of the paper}
\end{center}

We now give a brief outline of the rest of the paper.  In Section $2$, we establish some notation, fix our assumptions on the constitutive relations, and give the formal statement of the main results of our study.  Sections $3$ and $4$ are then devoted to the proofs of the local entropy inequality and the bounds from below on the temperature, respectively.  We conclude with a brief appendix giving a basic distributional calculation that will be useful for our arguments, and describing how an additional hypothesis of bounded density can lead to some relaxation in the growth hypotheses imposed on the entropy.

\section{Constitutive relations and general assumptions on the system}
\label{prelim}
We now introduce some hypotheses that further restrict the constitutive assumptions for the system ($\ref{nsf_system}$).  In particular, in the remainder of the paper we will assume that $p,e,s\in C^1((0,\infty)\times (0,\infty))$, $\mu,\eta\in C^1([0,\infty))$ and $\kappa\in C^1([0,\infty))$, $P\in C^1([0,\infty))$ satisfy the hypotheses listed below.

We begin by stating some structural hypotheses concerning the influence of viscosity and heat-conduction within the fluid.  In particular, we will assume that $\mathbb{S}$ and $\sigma$ take the form
\begin{align}
\nonumber \mathbb{S}&=\mu(2D(u)-\tfrac{2}{3}I\ebdiv u)+\eta I\ebdiv u\\
\sigma&\geq \frac{\mu|2D(u)-\frac{2}{3}I\ebdiv u|^2+2\eta |\ebdiv u|^2}{2\theta}+\frac{\kappa |\nabla\theta|^2}{\theta^2}\label{sigma_def}
\end{align}
with $D(u)=\nabla u+(\nabla u)^\top$ and where $\sigma$ is a Borel measure on $[0,T]\times\overline{\Omega}$.

Because we are working in a compressible model, the forces driving the fluids evolution include the pressure that the fluid exerts upon itself, in addition to the viscous interactions described by the stress tensor $\mathbb{S}$ above.  The derivation of these forces arises from thermodynamical considerations, beginning with Gibbs' equation, 
\begin{align}
\label{gibbs}\theta D_{(\rho,\theta)}s(\rho,\theta)&=D_{(\rho,\theta)}e(\rho,\theta)+p(\rho,\theta)D_{(\rho,\theta)}(\frac{1}{\rho}),\qquad \textrm{for}\quad\rho,\theta>0,
\end{align}
where $D_{(\rho,\theta)}=(\partial_\rho,\partial_\theta)$.
As mentioned above, the quantities $s$ and $p$ represent the entropy and pressure of the system, while $e$ represents the internal energy.  
Regarding $p$ and $e$, we require that for all $\rho>0$, there exists $\underline{e}(\rho)>0$ such that $\lim_{\theta\rightarrow 0^+} e(\rho,\theta)=\underline{e}(\rho)$, and that for all $\rho,\theta>0$ one has $\partial_\rho p(\rho,\theta)>0$, $0<\partial_\theta e(\rho,\theta)\leq c$ and $|\rho\partial_\rho e(\rho,\theta)|\leq ce(\rho,\theta)$.  Moreover, for the purposes of our study we restrict ourselves to the study of a monoatomic gas in the absence of thermal radiation effects, in which we have the further relation
\begin{align}
p(\rho,\theta)=\tfrac{2}{3}\rho e(\rho,\theta).\label{monatomic}
\end{align}

As a consequence of $(\ref{gibbs})$ and $(\ref{monatomic})$, there exists $P\in C^1$ such that $P(0)=0$, $P'(0)>0$, and 
\begin{align*}
p(\rho,\theta)=\theta^\frac{5}{2}P(\frac{\rho}{\theta^\frac{3}{2}})
\end{align*}
for all $\rho,\theta>0$.  In accordance with ($\ref{gibbs}$) and the above hypotheses on $p(\rho,\theta)$ and $e(\rho,\theta)$, we have
\begin{align}
s(\rho,\theta)=S(\frac{\rho}{\theta^{3/2}})\quad\textrm{with}\quad S'(Z)=-\frac{3}{2}\left(\frac{\frac{5}{3}P(Z)-ZP'(Z)}{Z^2}\right).\label{eqAA1}
\end{align}
Moreover, these hypotheses on $p(\rho,\theta)$ and $e(\rho,\theta)$ ensure that 
\begin{align}
S'(Z)\geq -c_1Z^{-1}, \quad Z>0.\label{eqAA2} 
\end{align}

Finally, concerning the shear viscosity and heat conduction coefficients $\mu$ and $\kappa$, we shall assume that for some $\alpha\in (\frac{2}{5},1]$, the conditions
\begin{align}
(1+\theta^\alpha)\underline{\mu}\leq \mu(\theta)\leq (1+\theta^\alpha)\overline{\mu},\label{eq_mu}
\end{align}
\begin{align*}
\sup_\theta |\mu'(\theta)|\leq \overline{m}
\end{align*}
and
\begin{align}
\underline{\kappa}\leq \kappa(\theta)\leq \overline{\kappa}(1+\theta^\alpha)\label{eq_kappa}
\end{align}
hold for some constants $0<\underline{\mu}<\overline{\mu}<\infty$, $0<\underline{\kappa}<\overline{\kappa}<\infty$ and $\overline{m}>0$.

\vspace{0.2in}

We remark that all of the above assumptions are physical, internally consistent, and also consistent with the work \cite{FeireislNovotnyBook} (see also \cite{FeireislBook}), up to the exclusion of the radiative term as discussed above.  For technical reasons, we will also impose two additional constraints:
\begin{enumerate}
\item[(i)] the inequality $\theta\leq \eta(\theta)$ holds for $\theta$ sufficiently small, and 
\item[(ii)] there exists $C_2>0$ such that
\begin{align}
S'(Z)\leq -C_2Z^{-1}\quad\forall Z>0.\label{growth1}
\end{align}
\end{enumerate}
The constraint (i) is a statement of non-degeneracy of the viscosity coefficient which enables us to use the diffusion in $\theta$ to control the growth of a quantity like $\log\theta$, while the constraint (ii) ensures that the entropy grows sufficiently fast as $\theta$ tends to zero.  Note that the case of affine pressure treated in \cite{MelletVasseur} corresponds to $S'(Z)=-Z^{-1}$.  We refer to Appendix B for a discussion of how $(\ref{growth1})$ can be relaxed when the density is known to remain bounded.

Having established these assumptions on the constitutive relations, we now address the main results of our study.
\section{The local entropy inequality for smooth solutions}

We now turn to the local entropy inequality ($\ref{eq_admiss}$) that we described in the introduction.  In particular, the statement of this inequality will make strong use of the following truncation operator, which will be applied to the temperature $\theta$: for $C>0$, we define $f_{C}:[0,\infty)\rightarrow [0,C)$ by
\begin{align*}
f_C(z)&=(z-C)_-+C=\min \{z,C\}.
\end{align*}
Fixing $0<\mathfrak{s}<t<T$, the local entropy inequality is then
\begin{align}
\nonumber &\int_{\Omega} \rho\tilde{s}_C(t,x)dx+\int_{\mathfrak{s}}^t\int_{\{\theta\leq C\}}\frac{\mu|2D(u)-\frac{2}{3}I\ebdiv u|^2+2\eta|\ebdiv u|^2}{2\theta}+\frac{\kappa|\nabla\theta|^2}{\theta^2}dxdt\\
&\hspace{0.2in}\leq \int_s^t\int_{\{\theta\leq C\}} \left(-\rho^2\partial_{\rho} s(\rho,C)\ebdiv u\right)dxdt+\int_{\Omega} \rho\tilde{s}_C(\mathfrak{s},x)dx,\label{eq_admiss}
\end{align}
where
\begin{align*}
\tilde{s}_C=s(\rho,C)-s(\rho,f_C(\theta)).
\end{align*}
In particular, smooth solutions to the Navier-Stokes-Fourier system ($\ref{nsf_system}$) satisfy $(\ref{eq_admiss})$:
\begin{proposition}
\label{thm_admissible}
Fix $m\in\mathbb{N}$.  If $(\rho,u,\theta)$ is a smooth solution to the Navier-Stokes-Fourier system ($\ref{nsf_system}$) with $\rho\in L^\infty(0,T;L^\omega(\Omega))$ for some $\omega>3$ and $u\in L^2(0,T;H^1_0(\Omega))$, then for every $0\leq \mathfrak{s}\leq t<\infty$ and $C>0$ the solution satisfies the local entropy inequality $(\ref{eq_admiss})$.
\end{proposition}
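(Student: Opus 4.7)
My plan is to derive, first at the level of a smooth regularisation $f_C^\delta$ of $f_C$, a pointwise evolution identity for $\rho\tilde s_C$, then integrate over $(\mathfrak{s},t)\times\Omega$, and extract the stated inequality from a favourably signed contribution supported on the level set $\{\theta=C\}$ that emerges as $\delta\to 0$. Since $s(\rho,C)$ is a function of $\rho$ alone, the continuity equation together with a standard renormalisation computation gives
\begin{align*}
\partial_t(\rho s(\rho,C)) + \ebdiv(\rho s(\rho,C) u) = -\rho^2\partial_\rho s(\rho,C)\ebdiv u.
\end{align*}
For the other piece I would apply the chain rule to $\rho s(\rho, f_C^\delta(\theta))$, use continuity to eliminate $(\partial_t+u\cdot\nabla)\rho$, and use the smooth entropy equation $\partial_t(\rho s) + \ebdiv(\rho s u) = \ebdiv(\kappa\nabla\theta/\theta) + \sigma$ (an equality for smooth solutions, with $\sigma$ equal to the right-hand side of $(\ref{sigma_def})$) to eliminate $\rho\partial_\theta s(\rho,\theta)(\partial_t+u\cdot\nabla)\theta$. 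Subtracting from the previous display and letting $\delta\to 0$ (routine by dominated convergence) should yield
\begin{align*}
\partial_t(\rho\tilde s_C) + \ebdiv(\rho\tilde s_C u) = \chi_{\{\theta<C\}}\bigl(-\rho^2\partial_\rho s(\rho,C)\ebdiv u - \ebdiv(\kappa\nabla\theta/\theta) - \sigma\bigr).
\end{align*}

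Integrating over $(\mathfrak{s},t)\times\Omega$, the convective divergence vanishes by $u|_{\partial\Omega}=0$. The heat-flux piece is cleanest to handle before passing to the limit in $\delta$: integration by parts against $(f_C^\delta)'(\theta)$ yields $\int (f_C^\delta)''(\theta)\kappa|\nabla\theta|^2/\theta\,dx$, which is $\leq 0$ because $f_C^\delta$ is concave near $C$, while the Neumann condition $\nabla\theta\cdot n=0$ kills the boundary contribution on $\partial\Omega$. In the limit $\delta\to 0$ the coarea formula identifies this as $-\int_{\{\theta=C\}}\kappa|\nabla\theta|/C\,dS$, with the favourable sign matching the fact that the outward unit normal to $\{\theta<C\}$ on this level set is $\nabla\theta/|\nabla\theta|$. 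Moving this nonpositive term to the left-hand side, discarding it, and replacing $\sigma$ by its lower bound from $(\ref{sigma_def})$ then produces $(\ref{eq_admiss})$.

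The main technical obstacle is justifying the chain rule for the Lipschitz but non-smooth $f_C$, and the integration by parts on the a priori irregular set $\{\theta<C\}$. Working throughout with $f_C^\delta$ sidesteps both: for fixed $\delta>0$ all computations are classical, while the level-set geometry enters only through the (signed) weak limit of $(f_C^\delta)''(\theta)|\nabla\theta|^2$. The integrability hypotheses $u\in L^2(0,T;H^1_0(\Omega))$ and $\rho\in L^\infty(0,T;L^\omega(\Omega))$ with $\omega>3$, together with smoothness of $(\rho,u,\theta)$, then make the pressure-like term $\rho^2\partial_\rho s(\rho,C)\ebdiv u$ integrable on $(0,T)\times\Omega$ and allow dominated convergence to close every passage to the limit.
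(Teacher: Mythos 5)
Your proposal follows essentially the same route as the paper: the same decomposition of $\partial_t(\rho\tilde{s}_C)+\ebdiv(\rho\tilde{s}_C u)$ into the $s(\rho,C)$ and $s(\rho,f_C(\theta))$ pieces, the same use of the continuity and entropy equations to reduce to $-\rho^2\partial_\rho s(\rho,C)\ebdiv u-\ebdiv(\kappa\nabla\theta/\theta)-\sigma$ on $\{\theta\leq C\}$, and the same favorable sign extracted from the level set $\{\theta=C\}$. The only difference is technical: where you regularize $f_C$ by a smooth concave $f_C^\delta$ and read the sign off $(f_C^\delta)''\leq 0$, the paper works directly with $f_C$ and proves the distributional inequality $-\ebdiv(\kappa\nabla\theta/\theta)\chi_{\theta\leq C}\leq -\ebdiv(\kappa\nabla f_C(\theta)/\theta)$ (Lemma A.1) by integrating by parts over $\{\theta\leq C\}$ and using $\nabla\theta\cdot\nu\geq 0$ on its boundary; your regularization is, if anything, slightly more robust, since it avoids treating $\partial\{\theta\leq C\}$ as a hypersurface.
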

It should be noted that the existence of such smooth solutions is an oustanding open question.  Nevertheless, Proposition $\ref{thm_admissible}$ indicates the plausability of imposing $(\ref{nsf_system})$ as an additional restriction on the class of weak solutions.

\begin{proof}[Proof of Proposition $\ref{thm_admissible}$]
Let $C>0$ be given and note that
\begin{align}
\partial_t (\rho \tilde{s}_C)+\ebdiv(\rho \tilde{s}_Cu)&=(I)-(II),\label{ref1}
\end{align}
where we have set
\begin{align*}
(I)&:=\partial_t (\rho s(\rho,C))+\ebdiv (\rho s(\rho,C)u)
\end{align*}
and
\begin{align*}
(II)&:=\partial_t (\rho s(\rho, f_C(\theta)))+\ebdiv(\rho s(\rho, f_C(\theta))u),\\
\end{align*}
Note that a straightforward calculation gives
\begin{align*}
(I)&=(\partial_t \rho)s(\rho,C)+\rho\partial_t s(\rho,C)+\nabla s(\rho,C)\cdot (\rho u)+s(\rho,C)\ebdiv (\rho u)\\
&=\rho (\partial_\rho s(\rho,C)\partial_t\rho)+(\partial_\rho s(\rho,C)\nabla\rho)\cdot (\rho u)\\
&=-\rho^2\partial_\rho s(\rho,C)\ebdiv u,
\end{align*}
where we have used the continuity equation $\partial_t \rho+\ebdiv(\rho u)=0$ to obtain both the second and third equalities.

For $(II)$, we will make use of the identity
\begin{align}
\partial_t\theta+\nabla\theta\cdot u&=\frac{1}{\alpha(\rho,\theta)}\bigg[\partial_t (\rho s)+\ebdiv(\rho s u)+\rho^2 \partial_\rho s(\rho,\theta)\ebdiv u\bigg],\label{eqident}
\end{align}
where we have set $\alpha(\rho,\theta)=\rho \partial_\theta s(\rho,\theta)$.  Indeed, using the definition of $s$ and the product rule, we obtain
\begin{align*}
&\partial_t(\rho s)+\ebdiv (\rho su)\\
&\hspace{0.2in}=\partial_t (\rho s(\rho,\theta))+\ebdiv (\rho s(\rho,\theta)u)\\
&\hspace{0.2in}=(\partial_t \rho)s(\rho,\theta)+\rho \partial_t s(\rho,\theta)+\nabla s(\rho,\theta)\cdot (\rho u)+s(\rho,\theta)\ebdiv (\rho u).
\end{align*}
This is then equal to
\begin{align*}
&\rho \bigg[\partial_\rho s(\rho,\theta)\partial_t\rho+\partial_\theta s(\rho,\theta)\partial_t \theta\bigg]+\bigg[ \partial_\rho s(\rho,\theta)\nabla \rho+\partial_\theta s(\rho,\theta)\nabla\theta\bigg]\cdot (\rho u)\\
&\hspace{0.4in}=\rho\partial_\rho s(\rho,\theta)(\partial_t\rho+\nabla\rho \cdot u)+(\rho\partial_\theta s(\rho,\theta))(\partial_t\theta+\nabla\theta\cdot u),
\end{align*}
which gives the identity.

Returning to $(II)$, we use the product rule to obtain,
\begin{align*}
(II)&=(\partial_t \rho)s(\rho,f_C(\theta))+\rho \partial_t s(\rho,f_C(  \theta ))+\nabla s(\rho,f_C(  \theta ))\cdot (\rho u)\\
&\hspace{0.2in}+s(\rho,f_C(  \theta ))\ebdiv (\rho u)\\
&=\rho\partial_\rho s(\rho,f_C(\theta))(\partial_t\rho+u\cdot \nabla\rho)+\rho f'_C(\theta)\partial_\theta s(\rho,f_C(\theta))(\partial_t\theta+u\cdot \nabla\theta)\\
&=-\rho^2\partial_\rho s(\rho,f_C(\theta))\ebdiv u+\rho f'_C(\theta)\partial_\theta s(\rho,f_C(\theta))(\partial_t\theta+u\cdot \nabla\theta)
\end{align*}
so that
\begin{align*}
(II)&=-\rho^2\partial_\rho s(\rho,\theta)\ebdiv u+\rho\partial_\theta s(\rho,\theta)(\partial_t \theta+u\cdot\nabla\theta)
\end{align*}
when $\theta\leq C$ and $(II)=(I)$ when $\theta>C$.

Combining these calculations, we obtain that $(\ref{ref1})$ is equal to
\begin{align*}
&\rho^2(\partial_\rho s(\rho,\theta)-\partial_\rho s(\rho,C))\ebdiv u\\
&\hspace{0.2in}-(\rho \partial_\theta s(\rho, \theta))(\partial_t\theta+u\cdot \nabla \theta)\\
&\hspace{0.4in}=(-\rho^2\partial_\rho s(\rho,C))\ebdiv u-\left[\partial_t(\rho s)+\ebdiv(\rho s u)\right]
\end{align*}
for $(t,x)$ such that $\theta\leq C$, and equal to $0$ when $\theta>C$.

We then have
\begin{align*}
&\partial_t (\rho\tilde{s}_C)+\ebdiv (\rho \tilde{s}_Cu)\\
&\hspace{0.2in}\leq \bigg[ -\rho^2\partial_\rho s(\rho ,C)\ebdiv u\\
&\hspace{0.5in}-\frac{\mu|2D(u)-\frac{2}{3}I\ebdiv u|^2+2\eta |\ebdiv u|^2}{2\theta}-\frac{\kappa|\nabla\theta|^2}{\theta^2}\bigg]1_{\{\theta\leq C\}}(\theta)\\
&\hspace{0.5in}-\ebdiv\left(\frac{\kappa\nabla\theta}{\theta}\right)1_{\{\theta\leq C\}}(\theta)\\
&\hspace{0.2in}\leq \bigg[ -\rho^2\partial_\rho s(\rho ,C)\ebdiv u\\
&\hspace{0.5in}-\frac{\mu|2D(u)-\frac{2}{3}I\ebdiv u|^2+2\eta |\ebdiv u|^2}{2\theta}-\frac{\kappa|\nabla\theta|^2}{\theta^2}\bigg]1_{\{\theta\leq C\}}(\theta)\\
&\hspace{0.5in}-\ebdiv\left(\frac{\kappa \nabla f_C(\theta)}{\theta}\right)
\end{align*}
in the sense of distributions, where we have used ($\ref{nsf_system}$), ($\ref{sigma_def}$) and Lemma $\ref{lem_app}$.\footnote{In fact, solutions of ($\ref{nsf_system}$) satisfy ($\ref{sigma_def}$) with equality.  We retain the inequality in our calculation to emphasize that ($\ref{eq_admiss}$) is consistent (at least formally) with ($\ref{sigma_def}$) under weaker notions of solution, provided that other steps in the argument can be given proper justification.}  The desired result follows by integrating over $[\mathfrak{s},t]\times\Omega$.
\end{proof}

\section{Temperature bounds: the proof of Theorem $\ref{thm_temp1}$}

We next turn to the proof of Theorem $\ref{thm_temp1}$.  Recall that the goal of this theorem is to establish uniform bounds from below on the temperature $\theta$ for weak solutions satisfying the local entropy inequality $(\ref{eq_admiss})$ (together with certain integrability conditions on $\rho$ and $u$).

The proof of this result follows the proof of \cite[Theorem $1$]{MelletVasseur} and, as we mentioned above, is based on the use of Stampacchia trunactions and De Giorgi's regularity theory for elliptic partial differential equations.  We remark that these methods have seen much recent application in parabolic problems and the equations of fluid mechanics; see for instance \cite{CaffarelliVasseur} and the references cited there - we also point out the works of Caffarelli et al. \cite{CVAnnals}, Beir\"ao da Veiga \cite{BeiraoDaVeiga} and Chan \cite{Chan}, as well as a treatment of the partial regularity theory \cite{VasseurNoDEA}.

To facilitate the De Giorgi iteration argument, we recall a lemma showing how superlinear bounds can lead to improved convergence properties.
\begin{lemma}
\label{lem_ineq}
Let $C>1$ and $\beta>1$ be given and let $(W_k)_{k\in\mathbb{N}}$ be a sequence in $[0,1]$ such that for every $k\in\mathbb{N}$, $W_{k+1}\leq C^{k+1}W_k^\beta$.  Then there exists $C_0^*$ such that $0<W_1<C_0^*$ implies $W_k\rightarrow 0$ as $k\rightarrow\infty$.
\end{lemma}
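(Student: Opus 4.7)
The plan is to seek a geometric ansatz $W_k \leq A q^{k-1}$ for appropriately chosen constants $A>0$ and $q\in(0,1)$, and then determine how small $W_1$ must be taken so that this bound propagates by induction through the nonlinear recursion. The base case forces $A \geq W_1$, so ultimately I will set $C_0^* := A$. For the inductive step, assuming $W_k \leq A q^{k-1}$, the hypothesis gives
\begin{align*}
W_{k+1} \leq C^{k+1}W_k^\beta \leq C^{k+1}A^\beta q^{(k-1)\beta},
\end{align*}
and the inductive closure $W_{k+1}\leq A q^k$ amounts to requiring
\begin{align*}
C^{k+1}A^{\beta-1}q^{k(\beta-1)-\beta} \leq 1.
\end{align*}

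Rearranging, the left-hand side equals $C\bigl(C q^{\beta-1}\bigr)^{k}\cdot A^{\beta-1}q^{-\beta}$. The key observation is that since $\beta>1$, by choosing $q$ small enough (concretely $q = C^{-2/(\beta-1)}$, so that $Cq^{\beta-1} = C^{-1}<1$) the $k$-dependent factor $\bigl(Cq^{\beta-1}\bigr)^{k}$ is uniformly bounded in $k\geq 1$. It then suffices to absorb the remaining constants by taking $A$ sufficiently small, specifically $A^{\beta-1} \leq q^{\beta}/C$, which forces a quantitative threshold of the form $A \leq C^{-\gamma}$ for an explicit $\gamma=\gamma(\beta)>0$. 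Setting $C_0^* := A$, the induction closes, which yields $W_k\leq A q^{k-1}\to 0$ since $q<1$.

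The argument is essentially a bookkeeping exercise; the only real content is the recognition that the superlinear exponent $\beta>1$ generates a factor $q^{k(\beta-1)}$ that beats the geometric growth $C^{k+1}$, provided $q$ is chosen first (only depending on $C$ and $\beta$) and then the initial size $W_1$ is taken small. No single step is a substantive obstacle; the main thing to get right is the order of quantifiers—choose $q$ first depending only on $C$ and $\beta$, and only afterward impose the smallness threshold on $A$ (and hence on $W_1$). A cleaner alternative would be to directly prove the stronger ansatz $W_k \leq r^{-k}$ for some $r>1$ depending on $(C,\beta)$, which is the formulation most commonly cited in De Giorgi-type arguments, but the calculations are essentially the same.
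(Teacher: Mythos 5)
Your proof is correct: the order of quantifiers is right (fix $q=C^{-2/(\beta-1)}$ so that $Cq^{\beta-1}=C^{-1}<1$, then shrink $A$), and the induction closes exactly as you compute. The paper does not prove this lemma itself --- it cites it as classical and refers to \cite{VasseurNoDEA} --- and your geometric-ansatz induction is precisely the standard argument that reference supplies, so nothing further is needed.
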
 
The estimate contained in Lemma $\ref{lem_ineq}$ is classical; for a proof, see for instance \cite{VasseurNoDEA}.  With this lemma in hand, we now address the proof of the theorem:
\begin{proof}[Proof of Theorem $\ref{thm_temp1}$]
Let $\Omega\subset\mathbb{R}^3$, $T>0$, and $(\rho,u,\theta)$ be given as stated.  Fix a decreasing sequence $(C_k)_{k\geq 0}\subset\mathbb{R}^+$ and an increasing sequence $(T_k)_{k\geq 0}\subset \mathbb{R}^+$, both to be chosen later in the argument, and define
\begin{align}
\nonumber U_k&:=U(C_k,T_k)
\end{align}
where for each $C>0$, $s>0$, we have set
\begin{align}
\nonumber U(C,\mathfrak{s})&:=\esssup_{\mathfrak{s}\leq t\leq T} \int_\Omega \rho(t,x)\log\left(C/f_{C}(\theta(t,x))\right)dx\\
\nonumber &\hspace{0.2in}+\int_{\mathfrak{s}}^T \int_{\Omega} \frac{\eta}{\theta}|\ebdiv u|^2\chi_{\theta\leq C}(t,x)dxdt\\
&\hspace{0.2in}+\int_{\mathfrak{s}}^T \int_{\Omega}\frac{\kappa |\nabla \theta|^2}{\theta^2}\chi_{\theta\leq C}(t,x)dxdt.\label{defu}
\end{align}
and where $\chi_{\theta\leq C}=\chi_{\{(t,x)\in [0,T]\times\Omega:\theta(t,x)\leq C\}}$.

\vspace{0.2in}

\noindent
{\bf Step $1$}: {\it Boundedness of $U_{k+1}$.}

\vspace{0.2in}

\noindent 
Note that $\tilde{s}_{C_{k+1}}=0$ on $\{(t,x):\theta \geq C_{k+1}\}$, while on the set $\{\theta<C_{k+1}\}$, we use ($\ref{growth1}$) to estimate $\tilde{s}_{C_{k+1}}$, obtaining
\begin{align*}
s(\rho,C_{k+1})-s(\rho,\theta)&=\int_{\theta}^{C_{k+1}} (\partial_\theta s)(\rho,\omega)d\omega=\int_{\theta}^{C_{k+1}} -\frac{3\rho}{2\omega^{5/2}}S'(\frac{\rho}{\omega^{3/2}})d\omega\\
&\geq c\int_{\theta}^{C_{k+1}} \frac{3}{2\omega}d\omega
=c\log\left(C_{k+1}/\theta\right).
\end{align*}

Invoking the local entropy inequality $(\ref{eq_admiss})$ and recalling $\rho_0(x)=\rho(0,x)$, $\theta_0(x)=\theta(0,x)$, we therefore get the inequality
\begin{align*}
U_{k+1}&\leq C\left(\int_{0}^T\int_{\Omega}\chi_{\theta\leq C_{k+1}}\rho^2(-\partial_\rho s(\rho,C_{k+1}))|\ebdiv u|dxdt\right.\\
&\hspace{2.6in}\left.+\int_{\Omega} \rho_0\tilde{s}_{C_{k+1}}(\rho_0,\theta_0)dx\right)
\end{align*}
Now, making use of ($\ref{eqAA1}$) and ($\ref{eqAA2}$), we obtain
\begin{align}
\nonumber U_{k+1}&=C\int_0^T\int_{\Omega} \chi_{\theta\leq C_{k+1}}\frac{\rho^2}{C_{k+1}^{3/2}}S'(\frac{\rho}{C_{k+1}^{3/2}})|\ebdiv u|dxdt\\
\nonumber &\hspace{0.2in}-\int_{\Omega}\int_{f_{C_{k+1}}(\theta_0)}^{C_{k+1}} \frac{3\rho_0^2}{2\omega^{5/2}}S'(\frac{\rho_0}{\omega^{3/2}})d\omega dx\\
\nonumber &\leq C\int_0^T\int_{\Omega} \chi_{\theta\leq C_{k+1}}\rho(t,x)|\ebdiv u|dxdt\\
&\hspace{0.2in}+\frac{3}{2}\int_{\Omega}\rho_0\log(C_{k+1}/f_{C_{k+1}}(\theta_0))dx\label{eqaaa1}
\end{align}
Using H\"older in the first term followed by the hypotheses $\rho\in L^\infty(0,T;L^\omega(\Omega))$, $u\in L^2(0,T;H_0^1(\Omega))$ and ($\ref{eqAA3}$), we therefore obtain
\begin{align}
U_{k+1}&\leq C^*\label{cstar}
\end{align}
for some $C^*>0$.

\vspace{0.2in}

\noindent {\bf Step $2$}: {\it Local entropy estimate for $U_{k+1}$.} 

\vspace{0.2in}

Arguing as above, we again invoke the local entropy inequality $(\ref{eq_admiss})$ and expand the interval of integration to $[T_{k},T_{k+1}]$ (using $-\partial_\rho s\geq 0$), which gives the estimate
\begin{align*}
U_{k+1}&\leq C\left(\int_{T_k}^{T}\int_{\Omega}\chi_{\theta\leq C_{k+1}}\rho^2(-\partial_\rho s(\rho,C_{k+1}))|\ebdiv u|dxdt\right.\\
&\hspace{2.6in}\left.+\int_{\Omega} \rho\tilde{s}_{C_{k+1}}(\mathfrak{s},x)dx\right)
\end{align*}
for a.e. $T_k\leq s\leq T_{k+1}$.  Integrating both sides of this inequality over $\mathfrak{s}\in [T_k,T_{k+1}]$ and dividing by $T_{k+1}-T_k$, we obtain
\begin{align*}
U_{k+1}&\leq C\left(\int_{T_k}^T\int_{\Omega} \chi_{\theta\leq C_{k+1}}\rho^2(-\partial_\rho s(\rho,C_k)|\ebdiv u|)dxdt\right.\\
&\hspace{1.6in}\left.+\frac{1}{T_{k+1}-T_k}\int_{T_{k}}^{T_{k+1}}\int_{\Omega} \rho\tilde{\mathfrak{s}}_{C_{k+1}}(0,x)dxd\mathfrak{s}\right)
\end{align*}

Arguing as in $(\ref{eqaaa1})$ and recalling that the sequence $(C_k)$ is decreasing, the Cauchy-Schwarz inequality gives the bound 
\begin{align*}
&\int_{T_k}^T\int_{\Omega} \chi_{\theta\leq C_{k+1}}\rho^2(-\partial_\rho s(\rho,C_{k+1}))|\ebdiv u|dxdt\\
&\hspace{0.2in}\leq C\left\lVert \frac{\eta^{1/2}\ebdiv u}{\theta^{1/2}}\chi_{\theta\leq  C_{k}}\right\rVert_{L^2([T_k,T];L^2(\Omega))}\left\lVert \rho\chi_{\theta\leq C_{k+1}}\right\rVert_{L^2([T_k,T];L^2(\Omega))}\\
&\hspace{0.2in}\leq CU_k^{1/2}\left(\int_{T_{k}}^T\int_{\Omega} \rho(t,x)^2\chi_{\theta \leq C_{k+1}}(t,x)dxdt\right)^{1/2},
\end{align*}
where we have used the constraint (i) appearing at the end of Section $2$.  This in turn gives
\begin{align}
U_{k+1}&\leq C\left[U_k^{1/2}(I)^{1/2}+(II)\right]\label{eq0bd}
\end{align}
with
\begin{align*}
(I)&:=\int_{T_{k}}^T\int_{\Omega} \rho(t,x)^2\chi_{\theta \leq C_{k+1}}(t,x)dxdt,\\
(II)&:=\frac{1}{T_{k+1}-T_k}\int_{T_k}^{T_{k+1}}\int_{\Omega} \rho\tilde{s}_{C_{k+1}}(\mathfrak{s},x)dxd\mathfrak{s}.
\end{align*}
The next two steps of the arugment consist of estimating the terms $(I)$ and $(II)$.

\vspace{0.2in}

\noindent {\bf Step $3$}: {\it Tchebyshev estimates for (I).}

\vspace{0.2in}

\noindent Define
\begin{align*}
F_k(\theta)&:=\chi_{\theta\leq C_k}\log\left(C_k/\theta\right),\\
R_k&:=\log\left(C_k/C_{k+1}\right),
\end{align*}
and observe that $(C_k)$ decreasing implies that the inequality $R_k\leq F_k(\theta(t,x))$ holds on the set $\{\theta<C_{k+1}\}$.  Fix parameters $\alpha$, $\beta$, $p$ and $q$ satisfying  
\begin{align}
\alpha\in (0,2),\quad \beta>0\quad\textrm{and}\quad p,q\geq 1\label{req1}
\end{align}
to be determined later in the argument, and let $p'$ and $q'$ be the conjugate exponents to $p$ and $q$.  

Using H\"older, we obtain the estimate
\begin{align}
\nonumber (I)&\leq \frac{1}{R_k^\beta}\int_{T_k}^T\int_{\Omega} \rho(t,x)^2F_k(\theta(t,x))^\beta dxdt\\
\nonumber &\leq \frac{1}{R_k^\beta}\lVert \rho\rVert^{2-\alpha}_{L^{(2-\alpha)p}([T_k,T];L^{(2-\alpha)q}(\Omega))}\lVert \rho^{\alpha} F_k(\theta)^\beta\rVert_{L^{p'}([T_k,T];L^{q'}(\Omega))}\\
&\leq \frac{C(T,|\Omega|,\alpha,p,q)}{R_k^\beta}\lVert \rho\rVert_{L^\infty(L^{\omega}(\Omega))}^{2-\alpha}\lVert \rho^{\alpha} F_k(\theta(t,x))^\beta\rVert_{L^{p'}([T_k,T];L^{q'}(\Omega))}\label{eqaa1}
\end{align}
provided that $\alpha$ and $q$ satisfy 
\begin{align}
(2-\alpha)q<\omega.\label{req2}
\end{align}

We now turn to the task of estimating $\lVert \rho^\alpha F_k^\beta\rVert_{L^{p'}L^{q'}}$.  In particular, we obtain
\begin{align}
\nonumber \lVert \rho^{\alpha} F_k^\beta\rVert_{L^{p'}L^{q'}}&=\lVert (\rho F_k)^{\alpha/\beta}F_k^{1-\frac{\alpha}{\beta}}\rVert_{L^{\beta p'}L^{\beta q'}}^\beta\\
\nonumber &\leq \lVert (\rho F_k)^{\alpha/\beta}\rVert_{L^{\infty}L_x^{\beta/\alpha}}^\beta\lVert F_k^{1-\frac{\alpha}{\beta}}\rVert_{L^\frac{2}{1-\frac{\alpha}{\beta}}L^\frac{6}{1-\frac{\alpha}{\beta}}}^\beta\\
\nonumber &=\lVert \rho F_k\rVert_{L^\infty L^1}^\alpha\lVert F_k\rVert_{L^2L^6}^{\beta-\alpha}\\
&\leq CU_{k}^\alpha\lVert F_k\rVert_{L^2L^6}^{\beta-\alpha}\label{eqaa2}
\end{align}
where we have set $p'=\frac{2}{\beta-\alpha}$ and $q'=\frac{6}{5\alpha+\beta}$, i.e.
\begin{align}
p=\frac{2}{2+\alpha-\beta}, \quad q=\frac{6}{6-5\alpha-\beta}.\label{req3}
\end{align}

The estimate of $\lVert F_k\rVert_{L^2L^6}$ is based the following inequality of Sobolev type adapted to the norms appearing in $U_k$, which we recall from \cite{MelletVasseur}.
\begin{lemma}[Sobolev-type inequality, \cite{MelletVasseur}]
\label{lem4}
Let $\Omega\subset\mathbb{R}^3$ be a bounded domain with smooth boundary.  Given $T>0$ and $\rho\in L^\infty([0,T];L^\omega(\Omega))$ for some $\omega>3$ such that
\begin{align*}
t\mapsto \int_{\Omega} \rho(t,x)dx
\end{align*}
is constant in $t$, there exists $C=C(\Omega,T,\rho,\omega)>0$ such that the inequality 
\begin{align*}
\lVert F\rVert_{L^2([0,T];L^6(\Omega))}\leq C(\lVert \rho F\rVert_{L^\infty([0,T];L^1(\Omega))}+\lVert \nabla F\rVert_{L^2([0,T];L^2(\Omega))})
\end{align*}
holds for every measurable $F:[0,T]\times\Omega\rightarrow [0,\infty)$,
\end{lemma}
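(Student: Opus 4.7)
The plan is to establish the inequality pointwise in time, then integrate in $t$. The key intermediate quantity is the spatial mean $\bar F(t):=\frac{1}{|\Omega|}\int_\Omega F(t,x)\,dx$. By standard Sobolev/Poincar\'e on the bounded smooth domain $\Omega\subset\mathbb{R}^3$, the zero-mean function $F(t,\cdot)-\bar F(t)$ satisfies
\begin{equation*}
\|F(t,\cdot)-\bar F(t)\|_{L^6(\Omega)}\leq C(\Omega)\|\nabla F(t,\cdot)\|_{L^2(\Omega)},
\end{equation*}
so in particular $\|F(t,\cdot)\|_{L^6}\leq C\|\nabla F(t,\cdot)\|_{L^2}+C|\bar F(t)|$. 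The whole problem thus reduces to dominating the scalar $|\bar F(t)|$ by $\|\rho F\|_{L^\infty L^1}$ and $\|\nabla F\|_{L^2 L^2}$.

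For this, I would exploit the mass $M:=\int_\Omega \rho(t,x)\,dx$, which by hypothesis is constant in $t$; assuming $M>0$ (otherwise $\rho\equiv 0$ and the statement is trivial on the right-hand side), decompose
\begin{equation*}
\bar F(t)\,M=\int_\Omega \rho(t,x)F(t,x)\,dx-\int_\Omega \rho(t,x)\bigl(F(t,x)-\bar F(t)\bigr)\,dx.
\end{equation*}
H\"older's inequality with the conjugate pair $(\omega,\omega')$ gives
\begin{equation*}
|\bar F(t)|\leq \frac{1}{M}\Bigl(\|\rho(t,\cdot) F(t,\cdot)\|_{L^1}+\|\rho(t,\cdot)\|_{L^\omega}\|F(t,\cdot)-\bar F(t)\|_{L^{\omega'}}\Bigr).
\end{equation*}
Since $\omega>3$, we have $\omega'<3/2<6$, and the embedding $L^6(\Omega)\hookrightarrow L^{\omega'}(\Omega)$ on the bounded domain, combined with the Poincar\'e--Sobolev bound above, gives $\|F(t,\cdot)-\bar F(t)\|_{L^{\omega'}}\leq C(\Omega)\|\nabla F(t,\cdot)\|_{L^2}$. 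Plugging this back yields the pointwise-in-time estimate
\begin{equation*}
\|F(t,\cdot)\|_{L^6(\Omega)}\leq C\bigl(\|\rho(t,\cdot) F(t,\cdot)\|_{L^1(\Omega)}+\|\nabla F(t,\cdot)\|_{L^2(\Omega)}\bigr),
\end{equation*}
with $C$ depending on $\Omega$, $\omega$, and $\|\rho\|_{L^\infty(L^\omega)}/M$.

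Squaring and integrating over $t\in[0,T]$ then produces
\begin{equation*}
\|F\|_{L^2([0,T];L^6(\Omega))}\leq C\bigl(T^{1/2}\|\rho F\|_{L^\infty([0,T];L^1(\Omega))}+\|\nabla F\|_{L^2([0,T];L^2(\Omega))}\bigr),
\end{equation*}
which is the claimed inequality after absorbing $T^{1/2}$ into the constant. The only place that required care is the domination of $\bar F$: this is where the two hypotheses on $\rho$ (upper bound in $L^\omega$ with $\omega>3$, and conservation of total mass) cooperate. I do not expect a serious obstacle; the sole subtle point is confirming that $\omega>3$ suffices for the embedding $L^6\hookrightarrow L^{\omega'}$ to close the estimate, and here the strict inequality $\omega'<6$ makes the argument robust.
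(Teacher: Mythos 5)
The paper states this lemma without proof, citing \cite{MelletVasseur}, and your argument is correct and is essentially the proof given there: split $F(t,\cdot)$ into its spatial mean plus a mean-zero fluctuation, apply Poincar\'e--Sobolev to the fluctuation, and use the conserved mass $M=\int_\Omega\rho\,dx$ together with H\"older in the $L^\omega$--$L^{\omega'}$ duality (where $\omega>3$ gives $\omega'<3/2<6$) to dominate the mean by $\lVert\rho F\rVert_{L^1}$ and $\lVert\nabla F\rVert_{L^2}$. One cosmetic point: when $M=0$ the inequality is false for a nonzero constant $F$ rather than ``trivial,'' so positivity of the total mass is an implicit hypothesis absorbed into the dependence of $C$ on $\rho$; in the application one has $\int_\Omega\rho_0\,dx>0$, so nothing is lost.
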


Note that $\int \rho(t,x)dx=\int \rho_0(x)dx$ for a.e. $t\in [0,T]$.  Recalling that $\rho\in L^\infty([0,T];L^\omega(\Omega))$ is satisfied by hypothesis, we may therefore invoke Lemma $\ref{lem4}$ in our setting to obtain
\begin{align}
\nonumber \lVert F_k\rVert_{L^2L^6}^{\beta-\alpha}&\leq C\left(U_k+\lVert \chi_{\theta\leq C_k}\frac{\nabla\theta}{\theta}\rVert_{L^2L^2}\right)^{\beta-\alpha}\\
&\leq C\left(U_k+U_k^{1/2}\right)^{\beta-\alpha}.\label{eqaa3}
\end{align} 
Combining ($\ref{eqaa1}$) with ($\ref{eqaa2}$) and ($\ref{eqaa3}$) then gives
\begin{align}
(I)\leq \frac{C}{R_k^\beta}\left(U_k^\beta+U_k^{(\alpha+\beta)/2}\right).\label{eq1bd}
\end{align}

\vspace{0.2in}

\noindent {\bf Step $4$}: {\it Estimate for (II).}

\vspace{0.2in}

\noindent Arguing as in ($\ref{eqaaa1}$) and recalling that the sequence $(C_k)$ is decreasing, we note that ($\ref{eqAA1}$) and $(\ref{eqAA2})$ imply
\begin{align}
\tilde{s}_{C_{k+1}}(\rho(\mathfrak{s},x),\theta(\mathfrak{s},x))&\leq c\chi_{\theta\leq C_{k+1}}(\mathfrak{s},x)F_k(\theta(\mathfrak{s},x)).\label{eqAAAB1}
\end{align}
for a.e. $\mathfrak{s}\in [T_k,T_{k+1}]$ and a.e. $x\in \Omega$.

Invoking H\"older and arguing as in Steps $1$ and $2$ above, we obtain
\begin{align*}
(II)&\leq \frac{1}{T_{k+1}-T_k}\lVert F_{k}\rVert_{L^2([T_k,T_{k+1}];L^6(\Omega))}\lVert \rho\chi_{\theta\leq C_{k+1}}\rVert_{L^2([T_k,T_{k+1}];L^{6/5}(\Omega))}\\
&\leq \frac{1}{T_{k+1}-T_k}(U_k+U_k^{1/2})\lVert \rho\chi_{\theta<C_{k+1}}\rVert_{L^2([T_k,T_{k+1}];L^{6/5}(\Omega))}.
\end{align*}
On the other hand, proceeding as in Step $3$, we fix $\beta_1\in (0,1)$ to be determined later in the argument, and recall that $R_k\leq F_k(\theta)$ on $\{\theta<C_{k+1}\}$.  This yields
\begin{align*}
&\lVert \rho\chi_{\theta<C_{k+1}}\rVert_{L^2([T_k,T_{k+1}];L^{6/5}(\Omega))}\\
&\hspace{0.2in}=\lVert \rho^{6/5}\chi_{\theta<C_{k+1}}\rVert_{L^{5/3}([T_k,T_{k+1}];L^1(\Omega))}^{5/6}\\
&\hspace{0.2in}\leq \frac{C(T)}{R_k^{\frac{5\beta_1}{6}}}\lVert \rho^{6/5} F_k(\theta)^{\beta_1}\rVert_{L^{\infty}([T_k,T];L^{1}(\Omega))}^{5/6}\\
&\hspace{0.2in}\leq \frac{C(T)}{R_k^{\frac{5\beta_1}{6}}}\lVert \rho^{\frac{6}{5}-\beta_1}\rVert_{L^{\infty}([T_k,T];L^{1/(1-\beta_1)}(\Omega))}^{5/6}\lVert (\rho F_k(\theta))^{\beta_1}\rVert_{L^{\infty}([T_k,T];L^{1/\beta_1})}^{5/6}\\
&\hspace{0.2in}\leq \frac{C(T,|\Omega|,\alpha_1,q_1)}{R_k^{\frac{5\beta_1}{6}}}\lVert \rho\rVert_{L^\infty(L^\omega(\Omega))}^{1-5\beta_1/6}\lVert \rho F_k(\theta)\rVert_{L^{\infty}([T_k,T];L^{1})}^{5\beta_1/6}\\
&\hspace{0.2in}\leq \frac{C(T,|\Omega|,\alpha_1,q_1)}{R_k^{5\beta_1/6}}\lVert \rho\rVert_{L^\infty (L^\omega(\Omega))}^{1-5\beta_1/6}U_k^{5\beta_1/6},
\end{align*}
provided that $(\frac{6}{5}-\beta_1)/(1-\beta_1)<\omega$.  This in turn gives
\begin{align}
(II)&\leq \frac{C}{(T_{k+1}-T_k)R_k^{5\beta_1/6}}(U_k+U_k^{1/2})U_k^{5\beta_1/6}\label{eq2bd}
\end{align}

\vspace{0.2in}

\noindent{\bf Step $5$}: {\it Conclusion of the argument.}

\vspace{0.2in}

\noindent Combining ($\ref{eq0bd}$) with ($\ref{eq1bd}$) and ($\ref{eq2bd}$), we obtain
\begin{align}
U_{k+1}\leq \frac{C}{R_k^{\beta/2}}(U_k^{\frac{1+\beta}{2}}+U_k^{\frac{2+\beta+\alpha}{4}})+\frac{C}{(T_{k+1}-T_k)R_k^{5\beta_1/6}}(U_k^{1+\frac{5\beta_1}{6}}+U_k^{\frac{1}{2}+\frac{5\beta_1}{6}}) \label{finalUk}
\end{align}
whenever $\alpha,\beta,p$ and $q$ satisfy ($\ref{req1}$), ($\ref{req2}$), ($\ref{req3}$) and $\beta_1$, $q_1$ satisfy $\beta_1>0$, $q_1\geq 1$, $(\frac{6}{5}-\beta_1)/(1-\beta_1)<\omega$.

To complete the proof, we will use ($\ref{finalUk}$) (with an appropriate choice of parameters) and Lemma $\ref{lem_ineq}$ to conclude that $U_k\rightarrow 0$ as $k\rightarrow\infty$ for suitably chosen sequences $(C_k)$ and $(T_k)$, with limits $C_k\rightarrow C_\infty>0$ and $T_k\rightarrow \tau>0$, respectively.

Temporarily postponing the choice of $(C_k)$ and $(T_k)$, we remark that in order to apply Lemma $\ref{lem_ineq}$ the powers of $U_k$ appearing on the right side of ($\ref{finalUk}$) must be greater than $1$.  This is the primary motivation behind our choice of parameters; combining this requirement with ($\ref{req1}$) and ($\ref{req3}$), it suffices to choose $\alpha$ and $\beta$ satisfying 
\begin{align*}
|\beta-2|<\alpha<\frac{6-\beta}{5}, \quad \beta>1.
\end{align*}
This condition is compatible with ($\ref{req2}$) for $\omega>3$; we therefore choose such a pair $(\alpha,\beta)$.  

To choose $\beta_1$, we note that the condition $\frac{1}{2}+\frac{5\beta_1}{6}>1$ is satisfied for any $\beta_1>\frac{3}{5}$.  Moreover, the condition $(\frac{6}{5}-\beta_1)/(1-\beta_1)<\omega$ is satisfied for $\beta_1$ sufficiently close to $\frac{3}{5}$; choosing such a value of $\beta_1$, we see that ($\ref{finalUk}$) holds.

We now turn to the choice of the sequences $(C_k)$ and $(T_k)$.  Fix $M>1$ to be determined later in the argument and let $\tau\in [0,T]$ be given.  Now, setting
\begin{align*}
C_k=\exp(-M(1-2^{-k}))
\end{align*}
and
\begin{align*}
T_k=\tau(1-2^{-k}),
\end{align*}
and using Step $1$, we obtain
\begin{align*}
\frac{U_{k+1}}{C^*}&\leq \frac{C2^{(k+1)\beta/2}}{C^*M^{\beta/2}}U_k^{\gamma_1}+\frac{C2^{(k+1)(1+5\beta_1/6)}}{\tau C^*M^{5\beta_1/6}}U_k^{\gamma_2}\\
&=\frac{C2^{(k+1)\beta/2}(C^*)^{\gamma_1-1}}{M^{\beta/2}}\left(\frac{U_k}{C^*}\right)^{\gamma_1}\\
&\hspace{0.2in}+\frac{C2^{(k+1)(1+5\beta_1/6)}(C^*)^{\gamma_2-1}}{\tau M^{5\beta_1/6}}\left(\frac{U_k}{C^*}\right)^{\gamma_2}\\
&\leq C^{k+1}\left(\frac{U_k}{C^*}\right)^{\max\{\gamma_1,\gamma_2\}}
\end{align*}
for some $\gamma_1,\gamma_2>1$, where $C^*$ is as in ($\ref{cstar}$).  Invoking Lemma $\ref{lem_ineq}$, we find $C_0^*>0$ such that $U_{1}\leq C_0^*$ implies $U_k\rightarrow 0$ as $k\rightarrow\infty$.  On the other hand, by Step $1$, we have
\begin{align*}
U_1\leq \frac{C}{M^{\beta/2}}+\frac{C}{\tau M^{5\beta_1/6}}.
\end{align*}
Choosing $M$ sufficiently large, we obtain $U_k\rightarrow 0$ as desired.  We now conclude the proof as in \cite{MelletVasseur}.  In particular, taking the limit (since all integrands involved are nonnegative) we have
\begin{align*}
\int_\tau^T\int_{\Omega}\frac{\kappa |\nabla\theta|^2}{\theta^2}\chi_{\theta\leq e^{-M}}(t,x)dxdt\leq \liminf_{k\rightarrow\infty} \int_\tau^T\int_{\Omega} \frac{\kappa |\nabla\theta|^2}{\theta^2}\chi_{\theta\leq C_k}dxdt=0
\end{align*}
so that after observing the identity $\frac{|\nabla\theta|^2}{\theta^2}\chi_{\theta\leq e^{-M}}=|\nabla\log(e^{-M}/f_{e^{-M}}(\theta))|^2$, we obtain that $x\mapsto \log(e^{-M}/f_{e^{-M}}(\theta))$ is constant in $x$ for a.e. $t\in [\tau,T]$; that is, for a.e. $t$ we can find $A(t)\geq 0$ such that $A(t)=\log(e^{-M}/f_{e^{-M}}(\theta(t,x)))$ for a.e. $x\in \Omega$.  Taking the limit once more and using the weak (renormalized) form of the continuity equation $\partial_t\rho+\ebdiv(\rho u)=0$, we therefore have
\begin{align*}
0&=\int_\Omega \rho(t,x)\log(e^{-M}/f_{e^{-M}}(\theta(t,x)))dx\\
&=A(t)\int_{\Omega} \rho(t,x)dx\\
&=A(t)\int_{\Omega} \rho_0dx,\quad \textrm{a.e.}\quad t\in [\tau,T].
\end{align*}
We therefore obtain $A(t)\equiv 0$ for a.e. $t\in [\tau,T]$, which establishes
\begin{align*}
\theta(t,x)\geq e^{-M}
\end{align*}
for a.e. $t\in [\tau,T]$ and a.e. $x\in \Omega$.  This completes the proof of Theorem $\ref{thm_temp1}$.
\end{proof}

\appendix

\section{A distributional calculation}

In this brief appendix, we prove the following lemma, which is used in the proof of Proposition $\ref{thm_admissible}$.

\begin{lemma}
\label{lem_app}
Suppose that $\theta$ is smooth.  Then for every $C>0$, the inequality
\begin{align*}
-\ebdiv\left(\frac{\kappa\nabla\theta}{\theta}\right)\chi_{\theta\leq C}\leq -\ebdiv \left(\frac{\kappa\nabla\theta}{\theta}\chi_{\theta\leq C}\right)
\end{align*}
holds in the sense of distributions.
\end{lemma}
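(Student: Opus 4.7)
The plan is to prove the inequality by approximating the characteristic function $\chi_{\theta\leq C}$ by a smooth, nonincreasing function of $\theta$, applying the classical product rule, and then passing to the limit in the sense of distributions. The heuristic is that in the distributional sense, $\nabla \chi_{\theta\leq C}$ is supported on the level set $\{\theta=C\}$ and points in the direction opposite to $\nabla \theta$ (since $\chi_{\theta\leq C}$ drops from $1$ to $0$ as $\theta$ crosses $C$ from below). Consequently, the distributional difference between the right-hand side and the left-hand side should equal a positive surface-type concentration of $\kappa|\nabla\theta|^2/\theta$ on $\{\theta=C\}$, which is manifestly nonnegative.

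To carry this out rigorously, I would first introduce a one-parameter family of smooth approximations $\phi_\epsilon \colon \mathbb{R}\to [0,1]$ to $\chi_{(-\infty,C]}$, chosen so that $\phi_\epsilon\equiv 1$ on $(-\infty,C]$, $\phi_\epsilon\equiv 0$ on $[C+\epsilon,\infty)$, and $\phi_\epsilon'\leq 0$ everywhere. Since $\theta$ is smooth and (by the physical assumption underlying the context) strictly positive, the composition $\phi_\epsilon(\theta)$ is smooth. Applying the product rule gives
\begin{align*}
\ebdiv\!\left(\phi_\epsilon(\theta)\frac{\kappa\nabla\theta}{\theta}\right)
&= \phi_\epsilon(\theta)\,\ebdiv\!\left(\frac{\kappa\nabla\theta}{\theta}\right) + \phi_\epsilon'(\theta)\frac{\kappa|\nabla\theta|^2}{\theta},
\end{align*}
and, since $\phi_\epsilon'\leq 0$ and $\kappa,\theta>0$, rearranging yields the pointwise inequality
\begin{align*}
-\phi_\epsilon(\theta)\,\ebdiv\!\left(\frac{\kappa\nabla\theta}{\theta}\right)
\leq -\ebdiv\!\left(\phi_\epsilon(\theta)\frac{\kappa\nabla\theta}{\theta}\right).
\end{align*}

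The remaining step is to pass $\epsilon\to 0^+$ in the sense of distributions. Given a nonnegative test function $\varphi\in C_c^\infty$, pair both sides against $\varphi$ (after integrating by parts the divergence on the right) so that each side becomes an integral of $\phi_\epsilon(\theta)$ multiplied by a function that is integrable on $\supp\varphi$. Pointwise convergence $\phi_\epsilon(\theta)\to \chi_{\theta\leq C}$ holds on all of $(0,T)\times\Omega$: it is immediate when $\theta<C$ or $\theta>C$, and on the level set $\{\theta=C\}$ both values equal $1$ for every $\epsilon>0$. Combined with the uniform bound $\phi_\epsilon(\theta)\in[0,1]$, dominated convergence gives convergence of both sides against $\varphi$ to the corresponding quantities in the statement of the lemma, and the inequality is preserved in the limit.

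The main (and essentially only) technical point is the passage to the limit, which reduces to a routine application of dominated convergence once the monotone smoothing $\phi_\epsilon$ with $\phi_\epsilon'\leq 0$ is chosen; the positivity of $\theta$ and smoothness of $\theta$ ensure that no singularities in $\kappa\nabla\theta/\theta$ obstruct the argument.
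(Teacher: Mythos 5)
Your argument is correct, but it takes a different route from the paper. The paper works directly with the sublevel set: it pairs the left-hand side with a nonnegative test function $\phi$, applies the divergence theorem on $\{\theta\leq C\}$, and identifies the defect between the two sides as the boundary term $\int_{\partial\{\theta\leq C\}}\frac{\kappa\nabla\theta}{\theta}\cdot\nu\,\phi\,dS$, which is nonnegative because the outer normal $\nu$ to the sublevel set satisfies $\nabla\theta\cdot\nu\geq 0$. You instead regularize the indicator by a smooth nonincreasing $\phi_\epsilon(\theta)$, obtain the sign from the pointwise term $\phi_\epsilon'(\theta)\kappa|\nabla\theta|^2/\theta\leq 0$ in the product rule, and pass to the limit by dominated convergence; your choice $\phi_\epsilon\equiv 1$ on $(-\infty,C]$ correctly ensures pointwise convergence to $\chi_{\theta\leq C}$ on the level set itself. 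The two proofs exhibit the same nonnegative ``excess'' (a surface-type concentration of $\kappa|\nabla\theta|^2/\theta$ on $\{\theta=C\}$), but your version buys something: it does not require $\partial\{\theta\leq C\}$ to be a hypersurface carrying a surface measure, which the paper's use of the divergence theorem tacitly assumes and which, even for smooth $\theta$, is only guaranteed for almost every value of $C$ (by Sard's theorem) rather than for every $C>0$ as the lemma asserts. The paper's proof is shorter and makes the geometric mechanism more visible; yours is more robust with respect to degenerate level sets. Both rely on $\theta>0$ on the support of the test function so that $\kappa\nabla\theta/\theta$ is $C^1$ there, which you flag explicitly.
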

\begin{proof}
Let $\phi\in\mathcal{D}$ be given such that $\phi\geq 0$, and let $C>0$ be given.  Then
\begin{align*}
\langle -\ebdiv\left(\frac{\kappa\nabla\theta}{\theta}\right)\chi_{\{\theta\leq C\}},\phi\rangle&=-\int_{\{\theta\leq C\}}\ebdiv\left(\frac{\kappa\nabla\theta}{\theta}\right)\phi dx\\
&=\int_{\{\theta\leq C\}} \frac{\kappa\nabla\theta}{\theta}\cdot\nabla\phi dx-\int_{\partial\{\theta\leq C\}} \frac{\kappa\nabla\theta}{\theta}\phi\cdot \nu dS
\end{align*}
where $\nu$ is the unit outer normal to $\{\theta\leq C\}$ and $dS$ is the appropriate surface measure.  The smoothness of $\theta$ gives $\theta=C$ and $\nabla\theta\cdot\nu\geq 0$ on $\partial\{\theta\leq C\}$, so that we have
\begin{align*}
\int_{\partial\{\theta\leq C\}} \frac{\kappa\nabla\theta}{\theta}\phi\cdot \nu dS\geq 0,
\end{align*}
which gives the result.
\end{proof}

\section{Allowing slightly nonlinear growth: the case of bounded density.}

In this appendix, we show how an additional assumption of bounded density can enable us to allow slightly nonlinear growth in the function $S$ compared to the condition ($\ref{growth1}$).  In particular, we shall replace $(\ref{growth1})$ with the condition
\begin{align}
S'(Z)\leq -\frac{C_2}{Z\log(3+Z)},\label{growth2}
\end{align}
retaining the other constitutive assumptions established in Section $2$.  Such expanded growth conditions are relevant in a variety of physical models; see for instance \cite{FeireislNovotnyProcRSE135A_2005} for a typical example.  For technical reasons, it is necessary in this case to assume that the initial temperature $\theta_0$ is bounded away from zero.  
\begin{proposition}
\label{thm_temp3}
Let $\Omega$ be a bounded open set, and let $T>0$ be given.  Suppose that $\mathbb{S}$, $\kappa$, $\sigma$, and $s$, $p$ satisfy the criteria established in Section $\ref{prelim}$ with $(\ref{growth1})$ replaced by $(\ref{growth2})$ for some $C_2>0$.  

Let $(\rho,u,\theta)$ be weak solution of the Navier-Stokes-Fourier system $(\ref{nsf_system})$ satisfying the local entropy inequality ($\ref{eq_admiss}$) for a.e. $0<t<T$ with $s=0$, together with the bounds $\rho\in L^\infty([0,T]\times\Omega)$, $u\in L^2(0,T;H^1_0(\Omega))$ and
\begin{align}
\exists \quad\overline{\theta}>0\quad\textrm{such that}\quad \theta_0\geq \overline{\theta}\quad\textrm{for a.e.}\quad x\in \Omega.\label{thm3-initial}
\end{align}

Then there exists $\eta_{T}>0$ such that 
\begin{align*}
\theta(t,x)\geq \eta_{T}.
\end{align*}
for a.e. $0\leq t<T$, and a.e. $x\in\Omega$.
\end{proposition}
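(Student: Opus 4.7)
The plan is to mirror the De Giorgi iteration used to prove Theorem \ref{thm_temp1}, with two structural modifications dictated by the new hypotheses. First, since $\theta_0\ge\overline{\theta}>0$, the initial entropy term $\int_\Omega\rho_0\tilde{s}_{C_{k+1}}(\rho_0,\theta_0)\,dx$ vanishes identically provided we arrange $C_k\le\overline{\theta}$ for all $k$; consequently, the local entropy inequality at $\mathfrak{s}=0$ suffices, the term $(II)$ disappears, and the entire Step $4$ of the original argument (together with the time sequence $(T_k)$) can be eliminated. Second, the weaker bound (\ref{growth2}) only yields a double-logarithmic lower bound on $s(\rho,C)-s(\rho,\theta)$, so the functional $U_k$ and the approximating sequence $(C_k)$ must be redesigned accordingly.

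To derive the key entropy estimate I would retrace Step $1$ of the proof of Theorem \ref{thm_temp1}, using (\ref{growth2}) in place of (\ref{eqAA2}): for $\theta<C$ one obtains
\[
s(\rho,C)-s(\rho,\theta)=\int_\theta^C -\frac{3\rho}{2\omega^{5/2}}S'(\rho/\omega^{3/2})\,d\omega\ge c\int_\theta^C \frac{d\omega}{\omega\log(3+\rho/\omega^{3/2})}.
\]
Appealing to $\rho\le\overline{\rho}$, one has $\log(3+\rho/\omega^{3/2})\le C_3(1+\log(1/\omega))$ for all $\omega$ sufficiently small, so the substitution $u=1+\log(1/\omega)$ gives
\[
s(\rho,C)-s(\rho,\theta)\ge c\bigl[\Phi(\theta)-\Phi(C)\bigr],\qquad \Phi(z):=\log\bigl(1+\log(1/z)\bigr),
\]
provided $C$ is sufficiently small.

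With this estimate in hand I redefine $F_k(\theta)=\chi_{\theta\le C_k}[\Phi(\theta)-\Phi(C_k)]$ and
\[
U_k:=\esssup_{0\le t\le T}\int_\Omega\rho F_k(\theta)\,dx+\int_0^T\!\!\int_\Omega\Bigl(\frac{\eta|\ebdiv u|^2}{\theta}+\frac{\kappa|\nabla\theta|^2}{\theta^2}\Bigr)\chi_{\theta\le C_k}\,dxdt,
\]
with the doubly exponential sequence $C_k=\exp\bigl(-\exp(M(1-2^{-k}))\bigr)$ chosen so that $R_k:=\Phi(C_{k+1})-\Phi(C_k)\ge c\,M\,2^{-k-1}$ reproduces the geometric decay of the original $R_k$. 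Since $|\nabla F_k|\le \chi_{\theta\le C_k}|\nabla\theta|/\theta$, Lemma \ref{lem4} still yields $\lVert F_k\rVert_{L^2L^6}\le C(U_k+U_k^{1/2})$, while the hypothesis $\rho\in L^\infty$ trivializes the constraint $(2-\alpha)q<\omega$ appearing in Step $3$ of the original proof. Proceeding through the analogues of Steps $1$--$3$ (Step $4$ being vacuous) yields a recursion of the form
\[
U_{k+1}\le \frac{C\,2^{(k+1)\beta/2}}{M^{\beta/2}}\bigl(U_k^{(1+\beta)/2}+U_k^{(2+\alpha+\beta)/4}\bigr),
\]
and the parameter choices of Step $5$ of Theorem \ref{thm_temp1} make both exponents exceed $1$. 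Taking $M$ large enough that $U_1$ falls below the threshold supplied by Lemma \ref{lem_ineq} then forces $U_k\to 0$, from which the lower bound $\theta\ge e^{-e^M}$ is extracted by passing to the limit in the dissipation term exactly as before.

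The main technical obstacle is the derivation of the double-logarithmic entropy estimate together with the verification that it interacts properly with the Sobolev-type bound of Lemma \ref{lem4}; the doubly exponential choice of $(C_k)$ is then forced by the requirement that the differences $\Phi(C_{k+1})-\Phi(C_k)$ decay like $2^{-k}$. The $L^\infty$ bound on $\rho$ is used at exactly one point, namely in replacing $\log(3+\rho/\omega^{3/2})$ by $C(1+\log(1/\omega))$, and this explains why the hypothesis appears in Proposition \ref{thm_temp3} but not in Theorem \ref{thm_temp1}.
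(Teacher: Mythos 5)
Your proposal is correct and follows essentially the same route as the paper's proof: the hypothesis $\theta_0\ge\overline{\theta}$ kills the initial-data term (so Step $4$ and the time sequence $(T_k)$ become vacuous), the entropy weight is modified to absorb the logarithmic loss in ($\ref{growth2}$) using $\rho\in L^\infty$, and the De Giorgi iteration of Theorem $\ref{thm_temp1}$ is rerun with the Sobolev bound of Lemma $\ref{lem4}$. The only difference is a reparametrization: the paper keeps the weight in integral form $W(\theta,C_k,\lVert\rho\rVert_{L^\infty})$ with the original sequence $C_k=\exp(-M(1-2^{-k}))$, whereas you evaluate it explicitly as a double logarithm $\Phi(\theta)-\Phi(C_k)$ and compensate with a doubly exponential choice of $C_k$; both choices give the required geometric decay of $R_k$, the smallness of $U_1$ for large $M$, and the same conclusion.
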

The proof is largely similar to the proof of Proposition $\ref{thm_temp1}$ given above, with some slight adjustment to account for the different assumption on the intial temperature profile $x\mapsto \theta(0,x)$.

\begin{proof}
In this setting, we again let $(C_k)\subset\mathbb{R}^+$ be a decreasing sequence, and define
\begin{align*}
V_k&:=\esssup_{0\leq t\leq T}\int_{\Omega} \rho W(\theta,C_{k},\lVert \rho\rVert_{L^\infty})dx\\
&\hspace{0.2in}+\int_0^T\int_{\Omega} \frac{\eta}{\theta}|\ebdiv u|^2\chi_{\theta\leq C_k}(t,x)dxdt\\
&\hspace{0.2in}+\int_0^T\int_{\Omega} \frac{\kappa |\nabla\theta|^2}{\theta^2}\chi_{\theta\leq C_k}(t,x)dxdt,\\
W(a,b,c)&:=\chi_{a<b}(a,b)\int_a^b \frac{1}{\omega\log(3+\frac{c}{\omega^{3/2}})}d\omega.
\end{align*}
Accordingly, arguing as in Step $1$ of the proof of Theorem $\ref{thm_temp1}$, we use the inequality
\begin{align*}
s(\rho,C_{k+1})-s(\rho,\theta)&\geq C\int_{\theta}^{C_{k+1}} \frac{3}{2\omega \log(3+\frac{\rho}{\omega^{3/2}})}d\omega\\
&\geq CW(\theta,C_{k+1},\lVert \rho\rVert_{L^\infty}) 
\end{align*}
along with ($\ref{eq_admiss}$) to obtain
\begin{align*}
V_{k+1}\leq C^*
\end{align*}
for some $C^*>0$, as well as
\begin{align*}
V_{k+1}&\leq C\int_0^T\int_{\Omega} \chi_{\theta\leq C_{k+1}}\rho^2(-\partial_\rho s(\rho,C_{k+1}))|\ebdiv u|dxdt\\
&\leq \left\lVert \frac{\eta^{1/2}\ebdiv u}{\theta^{1/2}}\chi_{\theta\leq C_k}\right\rVert_{L^2([0,T];L^2(\Omega))}\lVert \rho\chi_{\theta\leq C_{k+1}}\rVert_{L^2([0,T];L^2(\Omega))}\\
&\leq CV_k^{1/2}(I^{(W)})^{1/2}
\end{align*}
provided that $C_0$ is sufficiently small, where we have set
\begin{align*}
(I^{(W)})&:=\int_0^T\int_{\Omega} \rho(t,x)^2\chi_{\theta\leq C_{k+1}}(t,x)dxdt.
\end{align*}
Note that in performing this calculation, we have used the hypothesis ($\ref{thm3-initial}$) to eliminate the term corresponding to the initial condition.  Now, setting
\begin{align*}
F^{(W)}_{k}(\theta)&:=W(\theta,C_k,\lVert \rho\rVert_{L^\infty}),\\
R^{(W)}_{k}&:=W(C_{k+1},C_k,\lVert \rho\rVert_{L^\infty}),
\end{align*}
a simple calculation shows that $R^{(W)}_k\leq F^{(W)}_k(\theta(t,x))$ holds on the set $\{\theta<C_{k+1}\}$.  From here, we may proceed as in the proof of Theorem $\ref{thm_temp1}$; we include the full details for completeness.  Fixing $\alpha, \beta, p$ and $q$ satisfying
\begin{align*}
\alpha\in (0,2),\quad \beta>0\quad\textrm{and}\quad p,q\geq 1
\end{align*}
we argue as in ($\ref{eqaa1}$) to obtain the estimate
\begin{align*}
(I^{(W)})&\leq \frac{1}{(R^{(W)}_k)^\beta}\int_0^T\int_{\Omega}\rho(t,x)^2F^{(W)}_k(\theta(t,x))^\beta dxdt\\
&\leq \frac{C}{(R^{(W)}_k)^\beta}\lVert \rho\rVert_{L^{\infty}([0,T];L^{\omega}(\Omega))}^{2-\alpha}\lVert \rho^\alpha F^{(W)}_k(\theta)^\beta\rVert_{L^{p'}([0,T];L^{q'}(\Omega))}
\end{align*}
provided that ($\ref{req2}$) holds.  Proceeding as in ($\ref{eqaa2}$)-($\ref{eq1bd}$) (and in particular using Lemma $\ref{lem4}$), we write
\begin{align*}
\lVert \rho^\alpha (F^{(W)}_k)^\beta\rVert_{L^{p'}L^{q'}}&\leq \lVert \rho F^{(W)}_k\rVert_{L^\infty L^1}^\alpha \lVert F^{(W)}_k\rVert_{L^2L^6}^{\beta-\alpha}\\
&\leq CV_k^\alpha(\lVert \rho F^{(W)}_k\rVert_{L^\infty L^1}+\lVert \nabla F^{(W)}_k\rVert_{L^2L^2})^{\beta-\alpha}\\
&\leq CV_k^{\alpha}\left(V_k+\left\lVert \chi_{\theta\leq C_k}\frac{\nabla \theta}{\theta}\right\rVert_{L^2L^2}\right)^{\beta-\alpha}
\end{align*}
whenever ($\ref{req3}$) is satisfied, where we have observed that $\log(3+\frac{\lVert \rho\rVert_{L^\infty}}{\theta^{3/2}})\geq 1$ for a.e. $(t,x)\in (0,T)\times \Omega$.  We therefore obtain
\begin{align*}
(I^{(W)})&\leq \frac{C}{(R^{(W)}_k)^\beta}V_k^\alpha(V_k+V_k^{1/2})^{\beta-\alpha}\\
&\leq \frac{C}{(R^{(W)}_k)^\beta}(V_k^\beta+V_k^{(\alpha+\beta)/2})
\end{align*}
as before, so that
\begin{align*}
V_{k+1}&\leq \frac{C}{(R^{(W)}_k)^{\beta/2}}(V_k^{(1+\beta)/2}+V_k^{(2+\alpha+\beta)/4}).
\end{align*}
Now, fixing $M>0$ and choosing $\alpha, \beta, C_k$ and $T_k$ as in Step $5$ of Theorem $\ref{thm_temp1}$, we obtain
\begin{align*}
\frac{V_{k+1}}{C^*}\leq \frac{C^{k+1}}{C^*M^{\beta/2}}V_k^{\gamma_1}\leq C^{k+1}\left(\frac{V_k}{C^*}\right)^{\gamma_1}
\end{align*}
for some $\gamma_1>1$.  By Lemma $\ref{lem_ineq}$, we may therefore choose $C_0^*>0$ such that $V_1\leq C_0^*$ implies $V_k\rightarrow 0$ as $k\rightarrow\infty$.  As before, we may choose $M$ large enough so that this smallness condition for $V_1$ is satisfied.  We then obtain $\log(e^{-M}/f_{e^{-M}}(\theta))$ constant in $x$ for a.e. $t\in [0,T]$; this implies that for a.e. $t\in [0,T]$, either $\theta(t,x)\geq e^{-M}$ for a.e. $x\in \Omega$ (in which case the proof is complete), or 
\begin{align}
x\mapsto \theta(t,x)\quad\textrm{is constant a.e. on}\quad \Omega.\label{eqAAA115}
\end{align}
Suppose now that ($\ref{eqAAA115}$) holds.  Then, letting $\theta(t)=\theta(t,0)$, we obtain
\begin{align*}
0&=\int_\Omega \rho(t,x)W(\theta,e^{-M},\lVert \rho\rVert_{L^\infty})dx\\
&=W(\theta(t),e^{-M},\lVert \rho\rVert_{L^\infty})\int_{\Omega}\rho(t,x)dx\\
&=W(\theta(t),e^{-M},\lVert \rho\rVert_{L^\infty})\int_{\Omega}\rho_0dx
\end{align*}
for a.e. $t\in [0,T]$, where we have again used the conservation of mass as in the proof of Proposition $\ref{thm_temp1}$.  We therefore have $W(\theta(t,x),e^{-M},\lVert \rho\rVert_{L^\infty})=0$, and thus $\theta\geq e^{-M}$ for a.e. $t$ and $x$ as desired.  This completes the proof of Proposition $\ref{thm_temp3}$.
\end{proof}

\end{document}